%% This is file `elsarticle-template-2-harv.tex',
%%
%% Copyright 2009 Elsevier Ltd
%%
%% This file is part of the 'Elsarticle Bundle'.
%% ---------------------------------------------
%%
%% It may be distributed under the conditions of the LaTeX Project Public
%% License, either version 1.2 of this license or (at your option) any
%% later version.  The latest version of this license is in
%%    http://www.latex-project.org/lppl.txt
%% and version 1.2 or later is part of all distributions of LaTeX
%% version 1999/12/01 or later.
%%
%% The list of all files belonging to the 'Elsarticle Bundle' is
%% given in the file `manifest.txt'.
%%
%% Template article for Elsevier's document class `elsarticle'
%% with harvard style bibliographic references
%%
%% $Id: elsarticle-template-2-harv.tex 155 2009-10-08 05:35:05Z rishi $
%% $URL: http://lenova.river-valley.com/svn/elsbst/trunk/elsarticle-template-2-harv.tex $
%%
%\documentclass[%preprint,
%authoryear,
%10pt]{elsarticle}

%% Use the option review to obtain double line spacing
\documentclass%[authoryear,preprint,review,12pt]
{elsarticle}

%% Use the options 1p,twocolumn; 3p; 3p,twocolumn; 5p; or 5p,twocolumn
%% for a journal layout:
%% \documentclass[final,authoryear,1p,times]{elsarticle}
%% \documentclass[final,authoryear,1p,times,twocolumn]{elsarticle}
%%%% \documentclass[final,authoryear,3p,times]{elsarticle}
%% \documentclass[final,authoryear,3p,times,twocolumn]{elsarticle}
%% \documentclass[final,authoryear,5p,times]{elsarticle}
%% \documentclass[final,authoryear,5p,times,twocolumn]{elsarticle}

%% if you use PostScript figures in your article
%% use the graphics package for simple commands
%% \usepackage{graphics}
%% or use the graphicx package for more complicated commands
%% \usepackage{graphicx}
%% or use the epsfig package if you prefer to use the old commands
%% \usepackage{epsfig}

%% The amssymb package provides various useful mathematical symbols
%\usepackage{amssymb}
%% The amsthm package provides extended theorem environments
%% \usepackage{amsthm}

\usepackage{graphicx}
\usepackage{latexsym,amsmath}
\usepackage{amsmath,amsthm,amscd}
\usepackage{amsfonts}
\usepackage[psamsfonts]{amssymb}
\usepackage{enumerate}
\usepackage{mathtools}

\usepackage[final]
{showkeys}

\usepackage{url}

\usepackage{hyperref}

%% The lineno packages adds line numbers. Start line numbering with
%% \begin{linenumbers}, end it with \end{linenumbers}. Or switch it on
%% for the whole article with \linenumbers after \end{frontmatter}.
%% \usepackage{lineno}

%% natbib.sty is loaded by default. However, natbib options can be
%% provided with \biboptions{...} command. Following options are
%% valid:

%%   round  -  round parentheses are used (default)
%%   square -  square brackets are used   [option]
%%   curly  -  curly braces are used      {option}
%%   angle  -  angle brackets are used    <option>
%%   semicolon  -  multiple citations separated by semi-colon (default)
%%   colon  - same as semicolon, an earlier confusion
%%   comma  -  separated by comma
%%   authoryear - selects author-year citations (default)
%%   numbers-  selects numerical citations
%%   super  -  numerical citations as superscripts
%%   sort   -  sorts multiple citations according to order in ref. list
%%   sort&compress   -  like sort, but also compresses numerical citations
%%   compress - compresses without sorting
%%   longnamesfirst  -  makes first citation full author list
%%
%% \biboptions{longnamesfirst,comma}

% \biboptions{}

\theoremstyle{plain} 
\newtheorem{theorem}{Theorem}%[section]

%[theorem]
{Lemma}

\newtheorem{proposition}[theorem]{Proposition}

\theoremstyle{definition} 

\theoremstyle{definition} 

\newtheorem*{ex*}{Example}
\theoremstyle{remark} 

\theoremstyle{remark} 

\newtheorem*{remark*}{Remark}

\newcommand{\al}{\alpha}

\newcommand{\si}{\sigma}

\newcommand{\la}{\lambda}

\newcommand{\ep}{\varepsilon}

\newcommand{\La}{\Lambda}

\renewcommand{\P}{\operatorname{\mathsf{P}}} 
\newcommand{\E}{\operatorname{\mathsf{E}}}

\newcommand{\R}{\mathbb{R}}

\newcommand{\1}{\mathsf{1}}

\renewcommand{\le}{\leqslant}
\renewcommand{\ge}{\geqslant}

 \pagenumbering{arabic}

\newcommand{\fl}[1]{\lfloor#1\rfloor}

\newcommand{\X}{{\mathfrak{X}}}

\journal{Statistics and Probability Letters}

%\numberwithin{equation}{section}

\begin{document}

\begin{frontmatter}

%% Title, authors and addresses

%% use the tnoteref command within \title for footnotes;
%% use the tnotetext command for the associated footnote;
%% use the fnref command within \author or \address for footnotes;
%% use the fntext command for the associated footnote;
%% use the corref command within \author for corresponding author footnotes;
%% use the cortext command for the associated footnote;
%% use the ead command for the email address,
%% and the form \ead[url] for the home page:
%%
\title{Best lower bound on the probability of a binomial exceeding
its expectation}
%On the characteristic functions of the positive part and absolute value of a random variable
%\tnoteref{label1}
%
%\tnotetext[label1]{Supported by NSA grant H98230-12-1-0237}
%% \author{Name\corref{cor1}\fnref{label2}}
%% \ead{email address}
%% \ead[url]{home page}
%% \fntext[label2]{}
%% \cortext[cor1]{}
%% \address{Address\fnref{label3}}
%% \fntext[label3]{}

%\title{}

%% use optional labels to link authors explicitly to addresses:
%% \author[label1,label2]{<author name>}
%% \address[label1]{<address>}
%% \address[label2]{<address>}

\author{Iosif Pinelis}

\address{Department of Mathematical Sciences\\
Michigan Technological University\\
Houghton, Michigan 49931, USA\\
E-mail: ipinelis@mtu.edu}

\begin{abstract}
Let $X$ be a random variable distributed according to the binomial distribution with parameters $n$ and $p$. 
It is shown that $\P(X>\E X)\ge1/4$ if $1>p\ge c/n$, where $c:=\ln(4/3)$, the best possible constant factor.      
\end{abstract}

\begin{keyword}
%% keywords here, in the form: keyword \sep keyword
binomial distribution \sep probability inequalities \sep exact bounds 

%% MSC codes here, in the form: \MSC code \sep code
%% or \MSC[2008] code \sep code (2000 is the default)
\MSC[2010]	60E15, 62E15

% 	62E15   	Exact distribution theory
\end{keyword}

\end{frontmatter}

% \linenumbers

% https://mathoverflow.net/questions/301048/lower-bound-for-the-probability-of-binomial-variable-to-be-less-than-her-expecta

\newcounter{case}
\newenvironment{case}[1][]{\refstepcounter{case}%\par\medskip
   \emph{Case~\thecase: #1}}{}

\hspace{50pt}

\section{Summary and discussion}\label{intro}

\begin{theorem}\label{th:}
Let $X=X_{n,p}$ be a random variable (r.v.) with the binomial distribution with parameters $n$ and $p$. Then 
\begin{equation}\label{eq:}
	\P(X>\E X)\ge1/4
\end{equation}
if 
\begin{equation}\label{eq:p}
	1>p\ge c/n,
\end{equation}
where 
\begin{equation}
	c:=\ln(4/3)=0.28768\dots. \label{eq:c}
\end{equation}
Under condition \eqref{eq:p}, the equality in \eqref{eq:} is attained only if $n=2$ and $p=1/2$. 
The constant factor $c$ in \eqref{eq:p} is the best possible. 
\end{theorem}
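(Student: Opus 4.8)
The plan is to reduce \eqref{eq:} to a countable family of one-point inequalities and then treat the sharp constant and the extremal case. Put $m:=\lfloor np\rfloor$; since $X$ is integer-valued, $\P(X>\E X)=\P(X\ge m+1)$ whether or not $np$ is an integer. For each $k\in\{1,\dots,n\}$ the map $p\mapsto\P(X_{n,p}\ge k)$ is strictly increasing on $(0,1)$ (indeed $\frac{\d}{\d p}\P(X_{n,p}\ge k)=\frac{n!}{(k-1)!\,(n-k)!}\,p^{\,k-1}(1-p)^{\,n-k}>0$), so $p\mapsto\P(X_{n,p}>np)$ is strictly increasing on each piece $[\,m/n,(m+1)/n)$ and jumps downward at the points $p=m/n$. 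Since $c<1$, the point $c/n$ lies in the first piece $[c/n,1/n)$, so the infimum of $p\mapsto\P(X_{n,p}>np)$ over $[c/n,1)$ is attained either at $p=c/n$ or at one of the points $p=m/n$, $1\le m\le n-1$, and equals $\min\bigl(1-(1-c/n)^n,\ \min_{1\le m\le n-1}q(n,m)\bigr)$, where $q(n,m):=\P(X_{n,m/n}\ge m+1)$ (for $n=1$ there are no $q(n,m)$-terms and the infimum is $c>1/4$). The term $1-(1-c/n)^n$ exceeds $1/4$: the derivative of $n\mapsto n\ln(1-c/n)$ has the sign of $\ln(1-x)+\frac{x}{1-x}$ at $x=c/n\in(0,1)$, which is positive, so $(1-c/n)^n\uparrow e^{-c}=3/4$ and hence $1-(1-c/n)^n>1/4$.

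Everything thus reduces to showing $q(n,m)\ge1/4$ for all integers $1\le m\le n-1$, with equality only at $(n,m)=(2,1)$. This also pins down the equality case in \eqref{eq:}: since $\P(X_{n,p}>np)$ is strictly larger than its value at the left endpoint of the piece containing $p$ unless $p$ is that endpoint, equality in \eqref{eq:} forces $p=m/n$ with $q(n,m)=1/4$, hence $(n,m)=(2,1)$, i.e.\ $n=2$, $p=1/2$. The boundary value of $q$ is explicit: $q(m+1,m)=\P\bigl(X_{m+1,\,m/(m+1)}=m+1\bigr)=\bigl(1-\tfrac1{m+1}\bigr)^{m+1}$, which increases in $m$ and equals $1/4$ at $m=1$; so $q(m+1,m)\ge1/4$, with equality iff $m=1$. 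The crux is then the monotonicity claim that $q(n,m)$ is strictly increasing in the integer $n\ge m+1$ for each fixed $m\ge1$; granting it, $q(n,m)\ge q(m+1,m)\ge1/4$, with both inequalities tight exactly when $n=m+1$ and $m=1$. To prove the monotonicity I would use the beta-integral representation $q(n,m)=\frac{n!}{m!\,(n-m-1)!}\int_0^{m/n}t^{m}(1-t)^{\,n-m-1}\,\d t$, differentiate in $n$ regarded as a real parameter, and show that the positive contribution of the parameter $n-m$ outweighs the negative contribution of the shrinking upper limit $m/n$; equivalently, from the order-statistics identity $q(n,m)=\P\bigl((n-m)G\le mH\bigr)$ with independent $G\sim\mathrm{Gamma}(m+1)$ and $H\sim\mathrm{Gamma}(n-m)$, one can compare $q(n,m)$ with $q(n+1,m)$ after writing the $\mathrm{Gamma}(n+1-m)$ variable as $H$ plus an independent $\mathrm{Exp}(1)$. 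I expect this monotonicity in $n$ to be the main obstacle; the rest is bookkeeping. (Alternatively, for $1\le m\le n-2$ one can get the nonstrict bound $q(n,m)\ge1/4$ by invoking a known $1/4$-lower bound on $\P(X_{n,p'}\ge\E X)$ for $p'$ slightly above $m/n$, where the mean is non-integral, and letting $p'\downarrow m/n$; the case $m=n-1$ gives $q(n,n-1)=(1-1/n)^n\ge1/4$ directly; strictness for $(n,m)\ne(2,1)$ would still require a separate argument.)

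Finally, $c$ is best possible: given any $\tilde c<c$, choose $\lambda\in(\tilde c,c)$ and, for large $n$, set $p:=\lambda/n$. Then $p\ge\tilde c/n$ and $p<1$, while $np=\lambda<1$, so $\P(X_{n,p}>np)=\P(X_{n,p}\ge1)=1-(1-\lambda/n)^n$. Since $(1-\lambda/n)^n\uparrow e^{-\lambda}>e^{-c}=3/4$, we have $(1-\lambda/n)^n>3/4$ for all large $n$, whence $\P(X_{n,p}>np)<1/4$; thus no constant smaller than $c$ can replace $c$ in \eqref{eq:p}.
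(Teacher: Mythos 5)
Your reduction is essentially the paper's own argument. You cut $[c/n,1)$ at the points $p=m/n$, note that $\P(X_{n,p}>np)$ is increasing on each piece, and so reduce everything to the left-endpoint values $1-(1-c/n)^n$ and $q(n,m)=\P(X_{n,m/n}\ge m+1)$; the paper does the same via the stochastic monotonicity of $X_{n,p}$ in $p$ and the replacement of $p$ by $p_n=(m-1)/n$ (its $m$ is your $m+1$). Your treatment of the piece $[c/n,1/n)$, of the equality case, and of the optimality of $c$ all coincide with the paper's.

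The one step you leave open is exactly the step the paper also does not prove but cites: the strict increase of $q(n,m)$ in $n$ for fixed $m$ is precisely the second inequality of Anderson and Samuels \cite{anderson-samuels67}, invoked in the paper through \cite[Theorem~3]{bin-pois-mono}. You correctly identify it as the crux, but neither of your sketches closes it. The beta-integral route requires differentiating $\frac{n!}{m!\,(n-m-1)!}\int_0^{m/n}t^{m}(1-t)^{n-m-1}\,\d t$ in real $n$, which brings in digamma terms and is not a routine sign check. The Gamma-coupling route is worse than it looks: writing $q(n,m)=\P\bigl((n-m)G\le mH\bigr)$ and passing to $n+1$ asks whether $\P\bigl((n-m)G-mH\le mE-G\bigr)\ge\P\bigl((n-m)G-mH\le 0\bigr)$, where the ``threshold'' $mE-G$ has mean $-1$ and is negatively correlated with $(n-m)G-mH$ through $G$; both effects point the wrong way, so no crude stochastic-dominance argument can work, and the true proof needs the genuinely nontrivial Anderson--Samuels/Hoeffding machinery. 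Your fallback via the known bound $\P(X\ge\E X)>1/4$ for $p>1/n$ does yield $q(n,m)\ge1/4$ on letting $p\downarrow m/n$, but it loses strictness and leans on the very result of \cite{greenberg-mohri} that the theorem is meant to supersede. So the architecture is right and matches the paper; to complete the proof you should either cite Anderson--Samuels (or Hoeffding) for the monotonicity in $n$, as the paper does, or supply an actual proof of it, which your sketches do not yet do.
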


Complementing Theorem~\ref{th:} is the following simple proposition. 

\begin{proposition}\label{prop:}
If $c/n\ge p\ge0$, then $\P(X>\E X)=1-(1-p)^n\ge\max(1,bn)p$, where $b:= (1-e^{-c})/c=0.86901\ldots$. 
\end{proposition}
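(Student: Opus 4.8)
The plan is to prove the two assertions of the proposition in turn: first the identity $\P(X>\E X)=1-(1-p)^n$ under the hypothesis $c/n\ge p\ge0$, and then the inequality $1-(1-p)^n\ge\max(1,bn)p$. For the identity, note that $p\le c/n$ together with $c<1$ gives $0\le\E X=np\le c<1$. Since $X$ is a nonnegative integer-valued r.v.\ with $0\le\E X<1$, the events $\{X>\E X\}$ and $\{X\ge1\}$ coincide, and hence
\[
\P(X>\E X)=\P(X\ge1)=1-\P(X=0)=1-(1-p)^n .
\]

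For the inequality I would use the elementary factorization $1-(1-p)^n=p\sum_{j=0}^{n-1}(1-p)^j$ and bound the sum $S:=\sum_{j=0}^{n-1}(1-p)^j$ from below by $\max(1,bn)$ on the range $0\le p\le c/n$. On the one hand, all summands are nonnegative and the $j=0$ term equals $1$, so $S\ge1$. On the other hand, for $0\le p\le c/n$ we have $(1-p)^j\ge(1-c/n)^j$ for each $j$, whence
\[
S\ge\sum_{j=0}^{n-1}\Bigl(1-\tfrac{c}{n}\Bigr)^{j}=\frac{n}{c}\Bigl(1-\bigl(1-\tfrac{c}{n}\bigr)^{n}\Bigr)\ge\frac{n}{c}\bigl(1-e^{-c}\bigr)=bn ,
\]
where the last inequality uses $(1-c/n)^n\le e^{-c}$ (i.e.\ $1-x\le e^{-x}$ with $x=c/n$) and the final equality is merely the definition $b=(1-e^{-c})/c$. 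Multiplying $S\ge\max(1,bn)$ by $p\ge0$ yields $1-(1-p)^n=pS\ge\max(1,bn)p$, completing the proof.

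The argument is entirely elementary, and I do not anticipate any genuine obstacle; the only points worth singling out are the geometric-series identity $1-(1-p)^n=p\sum_{j<n}(1-p)^j$, the monotonicity $(1-p)^j\ge(1-c/n)^j$ on $[0,c/n]$, and the bound $(1-c/n)^n\le e^{-c}$, which together with the defining relation $1-e^{-c}=bc$ is exactly what makes the constant $bn$ come out. (An alternative route is to observe that $p\mapsto 1-(1-p)^n-bnp$ is concave on $[0,1]$, vanishes at $p=0$, and is nonnegative at $p=c/n$ by the same use of $1-e^{-c}=bc$ and $(1-c/n)^n\le e^{-c}$, hence is nonnegative throughout $[0,c/n]$; but the series bound above is shorter and also delivers the factor $1$ simultaneously.)
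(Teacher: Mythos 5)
Your proof is correct and follows essentially the same route as the paper: both bound $(1-(1-p)^n)/p$ from below by its value at $p=c/n$ and then apply $(1-c/n)^n\le e^{-c}$ together with $1-e^{-c}=bc$. Your geometric-series factorization $1-(1-p)^n=p\sum_{j=0}^{n-1}(1-p)^j$ is a nice touch, since it makes transparent the monotonicity in $p$ that the paper merely asserts, and it also delivers the factor $1$ and the identity $\P(X>\E X)=1-(1-p)^n$ with explicit justification.
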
 

%Considering the binomial r.v.\ $n-X$, one can rewrite Theorem~\ref{th:} as follows: 
%
%\begin{corollary}\label{cor:}
%Let $X$ be a r.v.\ with the binomial distribution with parameters $n$ and $p$. Then 
%\begin{equation}\label{eq:cor}
%	\P(X>\E X)\ge1/4
%\end{equation}
%if 
%\begin{equation}\label{eq:q}
%	p\ge c/n. 
%\end{equation}
%Under condition \eqref{eq:q}, the equality in \eqref{eq:cor} is attained only if $n=2$ and $p=1/2$. 
%Moreover, the constant factor $c$ in \eqref{eq:q} is the best possible. 
%\end{corollary}

A very short proof of Theorem~\ref{th:} will be given in Section~\ref{proofs}. This proof is based on a monotonicity result due to Anderson and Samuels~\cite{anderson-samuels67}, which in turn follows from a more general result due to Hoeffding ~\cite{hoeff56}. 

A bit longer proof of Theorem~\ref{th:}, which may still be of interest, is relegated to the appendix. This second proof is based on a version of the Berry--Esseen bound, which takes care of the main case when $np\ge2$ and $n(1-p)\ge2$, that is, when $2\le\E X\le n-2$. The remaining cases are rather easy to deal with, since all the values of $X$ are in the set $\{0,\dots,n\}$.

Previously it was shown \cite{greenberg-mohri} that, for $X$ as in Theorem~\ref{th:}, one has 
\begin{equation}\label{eq:gm}
	\P(X\ge\E X)>1/4
\end{equation}
if 
\begin{equation}\label{eq:q,gm}
	p>1/n. 
\end{equation}
Theorem~\ref{th:} improves the result of \cite{greenberg-mohri} in two ways at once: 
\begin{enumerate}[(i)]
\setlength\itemsep{1pt}
	\item The (optimal) constant factor $c=0.28768\dots$ in \eqref{eq:p} is better than the corresponding constant factor $1$ in \eqref{eq:q,gm}. (Concerning the strictness of the inequality $\P(X\ge\E X)>1/4$ in \eqref{eq:gm}, here one may recall that the inequality $\P(X>\E X)\ge1/4$ in \eqref{eq:} is strict unless $n=2$ and $p=1/2$ -- in which latter case condition \eqref{eq:q,gm} fails to hold.) 
	\item Instead of the probability $\P(X\ge\E X)$ in \eqref{eq:gm}, we have the (possibly) smaller probability $\P(X>\E X)$ in \eqref{eq:}.
\end{enumerate}

Improvement (i) and the optimality of the constant factor $c$ are illustrated in Figure~\ref{fig:pic}, showing the graphs %\break  
\begin{itemize}
\setlength\itemsep{0pt}
	\item $\{\big(p,\P(X_{n,p}>np)\big)\colon1/n\le p<1\}$ (solid) 
	\item $\{\big(p,\P(X_{n,p}>np)\big)\colon c/n<p\le1/n\}$ (dashed, black)	
	\item $\{\big(p,\P(X_{n,p}>np)\big)\colon 0<p\le c/n\}$ (dashed, gray)
\end{itemize}
%$\{\big(p,\P(X_{n,p}>np)\big)\colon1/n\le p<1\}$ (solid), $\{\big(p,\P(X_{n,p}>np)\big)\colon c/n<p\le1/n\}$ (dashed, black), and \break 
%$\{\big(p,\P(X_{n,p}>np)\big)\colon 0<p\le c/n\}$ (dashed, gray) 
for $n=5$. This figure is similar to \cite[Figure~2]{greenberg-mohri}, where the graphs over the interval $(c/n,1/n]$ were dashed, too. 

\begin{figure}[h]
	\centering
		\includegraphics[width=.6\textwidth]{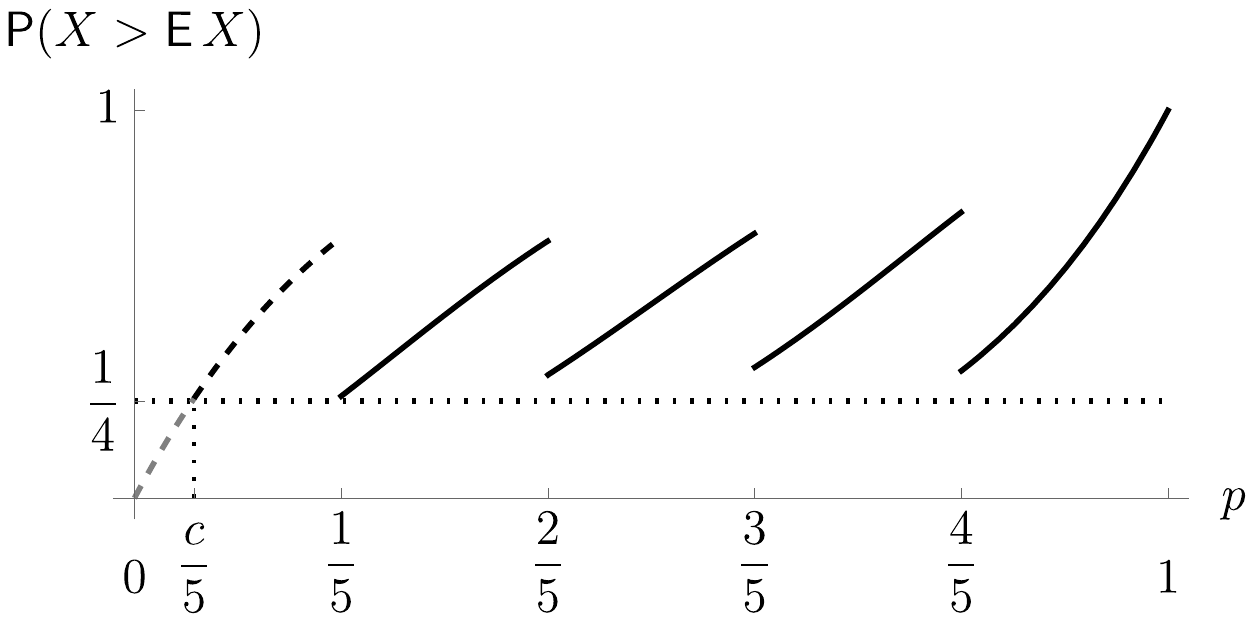}
	\caption{Graphs of $\P(X>\E X)$.}
	\label{fig:pic}
\end{figure}

However, concerning improvement (i), one should note that the case when $c\le np<1$ -- considered %as Case~\ref{cs:4} in the first
in the beginning of the proof of Theorem~\ref{th:} -- is comparatively simple. As for improvement (ii), inequality \eqref{eq:} follows from its non-strict counterpart  $\P(X\ge\E X)\ge1/4$ upon noting that $\P(X>\E X)=\P(X_{n,p}>np)$ is right-continuous in $p$ and $\P(X>\E X)=\P(X\ge\E X)\ge1/4$ if $np$ is not an integer. 
%
%
%%(i) instead of the probability $\P(X\ge\E X)$ in \eqref{eq:gm}, we have the (possibly) smaller probability $\P(X>\E X)$ in \eqref{eq:} and (ii) the (optimal) constant factor $c=0.28768\dots$ in \eqref{eq:p} is better than the corresponding constant factor $1$ in \eqref{eq:q,gm}. (Concerning the strictness of the inequality $\P(X\ge\E X)>1/4$ in \eqref{eq:gm}, here one may recall that the inequality $\P(X>\E X)\ge1/4$ in \eqref{eq:} is strict unless $n=2$ and $p=1/2$ -- in which latter case condition \eqref{eq:q,gm} fails to hold.) 
%
%In Section~\ref{proofs}, two proofs of Theorem~\ref{th:} will be given. The first one of them is based on a version of the Berry--Esseen bound, which takes care of the main case when $np\ge2$ and $n(1-p)\ge2$, that is, when $2\le\E X\le n-2$. The remaining cases are rather easy to deal with, since all the values of $X$ are in the set $\{0,\dots,n\}$. The second proof of Theorem~\ref{th:} is based on a monotonicity result due to Anderson and Samuels~\cite{anderson-samuels67}, which in turn follows from a more general result due to Hoeffding ~\cite{hoeff56}. The second proof also borrows the consideration of one of the simple cases treated in the first proof of Theorem~\ref{th:}.  
%
%Even though the constant factor $c$ is now optimal, improvement (ii), stated above, seems to be less important than the first improvement, because the case when $c\le np<1$ -- considered as Case~\ref{cs:4} in the first proof of Theorem~\ref{th:} -- is comparatively simple.

So, the main distinction of the present note from \cite{greenberg-mohri} is perhaps that each of the two proofs of Theorem~\ref{th:} given here 
%
%Also, our proof of Theorem~\ref{th:} 
appears to be 
significantly simpler than the proof in \cite{greenberg-mohri}. 

As noted in \cite{greenberg-mohri}, inequality \eqref{eq:gm} was used several times in the machine learning literature, including \cite{anthony-etal,vapnik98,vapnik06}, 
%IP07-22-21
to bound the probability of the so-called relative deviation of frequencies from the corresponding probabilities for certain classes of events. Such results have applications to the so-called probably-approximately-correct (PAC) models of machine learning; concerning PAC \break 
models, see e.g.\ \cite{DBLP:journals/cacm/Valiant84,DBLP:journals/iandc/Haussler92,KonPin2019,alon2021}. 

%, introduced by
%Valiant [ 133 and developed by many researchers [4,5].
%
%. In particular, 

%IP07-22-21
In \cite{and-jaeger}, the non-strict version, $\P(X\ge\E X)\ge1/4$, of 
inequality ~\eqref{eq:} was obtained, but only for large enough $n$ and $p\ge2/n$. 

In \cite[Lemma~13]{rig-tong}, it was shown that 
\begin{equation}\label{eq:RT}
	\P(X\ge\E X)\ge\min(p,1/4)
\end{equation}
for 
\begin{equation}\label{eq:p RT}
	p\in(0,1/2]. 
\end{equation}
This was used to prove a part of \cite[Proposition~8]{rig-tong}. To state that result, we need to reproduce several definitions from \cite{rig-tong}. Let $(X,Y)$ be a random vector in $\X\times\{-1,1\}$, where $\X$ is a Borel subset of $\R^d$. A classifier is a Borel-measurable map from $\X$ to $\{-1,1\}$. For any classifier $h$, consider the two types of error probabilities, 
\begin{equation*}
	R^-(h):=\P(h(X)\ge0|Y=-1)\quad\text{and}\quad R^+(h):=\P(h(X)<0|Y=1),
\end{equation*}
and also the empirical counterpart 
\begin{equation*}
	\hat R^-(h):=\frac1{n^-}\sum_{i=1}^{n^-}\1(h(X_i^-)\ge0)
\end{equation*}
of $R^-(h)$, where $X_1,\dots,X_{n^-}$ is a (training) iid sample from the conditional distribution of $X$ given $Y=-1$, and
$\1(A)$ denotes the indicator of an assertion $A$ (so that $\1(A)=1$ if $A$ is true and $\1(A)=0$ if $A$ is false). 

The mentioned result in \cite{rig-tong} is as follows: there exist classifiers $h_1$ and $h_2$ and a probability distribution for $(X,Y)$ such that, for any $\al\in(0,1/2]$ and any r.v.\  $\La$ with values in $[0,1]$ such that for the random ``pseudo-classifier'' $h_\La:=\La h_1+(1-\La)h_2$ we have $\hat R^-(h_\La)<\al$, the event that the ``excess type II risk''
\begin{equation*}
	R^+(h_\La)-\min_{\la\in[0,1]\colon R^+(h_\la)}R^+(h_\la)
\end{equation*}
is $\ge\al$ occurs with a probability $P\ge\min(\al,1/4)$. 
 
Using inequality \eqref{eq:} with condition \eqref{eq:p} -- instead of inequality \eqref{eq:RT} with condition \eqref{eq:p RT}, we can replace the conditions $\al\in(0,1/2]$ and $P\ge\min(\al,1/4)$ in the cited result in \cite{rig-tong} by the respective conditions $\al\in[c/n,1]$ and $P\ge1/4$, which will constitute a substantial improvement, in the case when $\al\ge c/n$. For the simpler case of $\al\in(0,c/n]$, an improvement over the result in \cite{rig-tong} can be similarly obtained using Proposition~\ref{prop:}. 
%, if $n\ge2$; the case $n=1$ is trivial.  

%!!!PAC

\section{Proofs}\label{proofs}

Here and in what follows, 
\begin{equation}\label{eq:q}
	q:=1-p. 
\end{equation} 

\begin{proof}[Proof of Theorem~\ref{th:}]
If $n=1$, then $$\P(X>\E X)=\P(X>p)%\break 
=\P(X=1)=p=np\ge c>1/4,$$ so that \eqref{eq:} holds, with the strict inequality. 

Fix now any natural $n\ge2$. Consider first the case when $c\le np<1$. 
%
%
%\begin{case}\label{cs:4}
%\emph{$c\le np<1$ and $n\ge1$, where $c$ is as in \eqref{eq:c}}.  
%\end{case}
%%\emph{Case 4: $c\le np<1$ and $n\ge1$, where $c$ is as in \eqref{eq:c}.} 
Then 
\begin{equation}\label{eq:case5}
	\P(X>np)=1 - q^n\ge1 - q^{c/p}=1 - (\tfrac43)^{\frac{\ln(1-p)}p}>1-(\tfrac43)^{-1}
	=\tfrac14,  
\end{equation}
so that $\P(X>np)>\frac14$. Moreover, if $c=\ln\tfrac43$ is replaced here by any $c_1\in(0,c)$, and if $p=c_1/n$ with $n\to\infty$, then $\P(X>np)=1 - q^n=1 -(1-c_1/n)^n\to1-e^{-c_1}<1-e^{-c}=1/4$. 

Therefore, the constant factor $c$ in \eqref{eq:p} cannot be improved and, moreover,  
%By the consideration of Case~\ref{cs:4} in the above proof, 
without loss of generality (wlog)
\begin{equation}\label{eq:np>1}
	np\ge1. 
\end{equation}
%and the constant factor $c$ in \eqref{eq:p} cannot be improved. 
So,  
\begin{equation}
	m:=m_n:=\fl{np}+1\in[2,n].   
\end{equation}
Introduce also 
\begin{equation}
	p_j:=p_{n,j}:=(m_n-1)/j=(m-1)/j 
\end{equation}
for $j\in\{m,\dots,n\}$. 
Then 
\begin{equation}\label{eq:proof2}
	\P(X>\E X)=\P(X_{n,p}>np)=%\P(X_{n,p}>\fl{np})=
	\P(X_{n,p}\ge m) %\\ 
	\ge\P(X_{n,p_n}\ge m).
\end{equation}
The latter inequality, which follows from the (strict) stochastic monotonicity of $X_{n,p}$ in $p$ and the inequality $p\ge p_n$, is strict unless $p=p_n$ (that is, unless $np$ is an integer). 
Next, by part (i) of \cite[Theorem 3]{bin-pois-mono} (which 
immediately follows from the second inequality in \cite[Theorem~2.1]{anderson-samuels67}, again by the stochastic monotonicity of $X_{n,p}$ in $p$), we have 
$\P(X_{j+1,p_{j+1}}\ge m)>\P(X_{j,p_j}\ge m)$ for all $j\in\{m,\dots,n-1\}$. 
So, $\P(X_{n,p_n}\ge m)\ge\P(X_{m,p_m}\ge m)$, and this inequality is strict unless $m=n$. Also, $\P(X_{m,p_m}\ge m)=(1-1/m)^m\ge(1-1/2)^2=1/4$, and $\P(X_{m,p_m}>m)>1/4$ unless $m=2$. 
It follows that $\P(X>\E X)>1/4$ unless $n=m=2$ and $np$ is an integer. 
Thus, in view of \eqref{eq:np>1}, $\P(X>\E X)>1/4$ unless $n=2$ and $p=1/2$. 
That $\P(X>\E X)=1/4$ if $n=2$ and $p=1/2$ is trivial. 
This completes the proof of Theorem~\ref{th:}. 
\end{proof} 

\begin{proof}[Proof of Proposition~\ref{prop:}]
If $c/n\ge p\ge0$, then $\P(X>\E X)=1-(1-p)^n$. Next, $(1-(1-p)^n)/(np)$ is decreasing in $p\in(0,1]$, so that for $p\in(0,c/n]$ we have $(1-(1-p)^n)/(np)\ge(1-(1-c/n)^n)/c\ge (1-e^{-c})/c=b$, so that $1-(1-p)^n\ge bnp$. The inequality $1-(1-p)^n\ge p$ is obvious. 
This completes the proof of Proposition~\ref{prop:}. 
\end{proof}

%%%[1]: https://en.wikipedia.org/wiki/Berry%E2%80%93Esseen_theorem
%%%
%%%[2]: https://cs.nyu.edu/media/publications/binomial.pdf  

\bibliographystyle{model2-names}
%\bibliography{SPL}
 
% BibTeX users please use one of
%\bibliographystyle{spbasic}      % basic style, author-year citations
%\bibliographystyle{amsplain}      % mathematics and physical sciences
%\bibliographystyle{spphys}       % APS-like style for physics
%\bibliography{}   % name your BibTeX data base
%
%\bibliographystyle{abbrv}
%%\bibliographystyle{amsplain}

\bibliography{C:/Users/ipinelis/Documents/pCloudSync/mtu_pCloud_02-02-17/bib_files/citations04-02-21} 
%\bibliography{P:/pCloudSync/mtu_pCloud_02-02-17/bib_files/citations12.13.12} 

\appendix
\section*{Appendix}

\begin{proof}[Second proof of Theorem~\ref{th:}]
At least one of the following %six 
five cases must occur: 

\begin{case}\label{cs:0}
\emph{$np\ge2$ and $nq\ge2$}  
\end{case}
(recall the convention $q:=1-p$ in \eqref{eq:q}). 

\begin{case}\label{cs:4}
\emph{$c\le np<1$ and $n\ge1$, where $c$ is as in \eqref{eq:c}}.  
\end{case}

\begin{case}\label{cs:3}
\emph{$1\le np<2$ and $n\ge3$}.  
\end{case}

\begin{case}\label{cs:1}
\emph{$1<nq\le2$ and $n\ge3$}.  
\end{case}

\begin{case}\label{cs:2}
\emph{$0<nq\le1$ and $n\ge2$}.  
\end{case} 

%\begin{case}\label{cs:5}
%\emph{$n\in\{1,2\}$}.  
%\end{case}

In particular, note that the cases when either (i) $n=1$ or (ii) $n=2$ and $p<1/2$ are covered by Case~\ref{cs:4}, whereas  
the case when $n=2$ and $p\ge1/2$ is covered by Case~\ref{cs:2}. 

Consider now each of the five listed cases. 

\emph{Case~\ref{cs:0}}: 
%
%\begin{case}\label{cs:0}
%\emph{$np\ge2$ and $nq\ge2$}  
%\end{case}
%(recall the convention $q:=1-p$ in \eqref{eq:q}).  %and in what follows, 
%%
%%Case~\ref{cs:0}
%
%
%\emph{Case 0: $np\ge2$ and $nq\ge2$.} Here and in what follows, 
%\begin{equation*}
%	q:=1-p. 
%\end{equation*} 
The version of the Berry--Esseen bound given in \cite[Theorem~1]{kor-shev12} implies  
\begin{equation*}
	\P(X>\E X)=\P(X>np)\ge \frac12-\ep(n,p),\quad\text{where}\quad
	\ep(n,p):=\frac{c_3}{\sqrt n}\Big(\frac\rho{\si^3}+c_2\Big), 
\end{equation*}
$\rho=p^3q+q^3p$, $\si=\sqrt{pq}$, $c_3:=\frac{33477}{100000}$, $c_2=\frac{429}{1000}$. 

Note that $p^3q/\si^3=p^{3/2}(1-p)^{-1/2}$ is convex in $p$ and, similarly, $q^3p/\si^3$ is convex in $p$, so that $\rho/\si^3$ and $\ep(n,p)$ are convex in $p$. Therefore and in view of 
%by the symmetry %$p\leftrightarrow q$ 
%of $\ep$ and of 
the Case~\ref{cs:0} conditions $np\ge2$ and $nq\ge2$, we have $\ep(n,p)\le\ep(n,2/n)\break =\ep(n,1-2/n)=:\ep_*(n)$, which is 
%with respect to the interchange of $p$ and $q$, 
%to bound $\frac12-\ep(n,p)$ from below we may assume that $p=2/n$ or $q=2/n$, and then $\ep=\ep_*(n)$, where $\ep_*$ is 
a simple algebraic function of $n$. For the derivative $\ep'_*(n)$ of $\ep_*(n)$ in $n$, we see that $\ep'_*(n)n^{5/2}(n-2)^{3/2}$ is a polynomial in $(n-2)^{1/2}$, of degree $5$.  
Therefore, it is easy to see that $\ep_*(n)$ is decreasing in $n\in[4,6]$, increasing in $n\in[7,89]$, and decreasing in $n\in[90,\infty)$. Also, the conditions $np\ge2$ and $nq\ge2$ imply $n=np+nq\ge4$. So, in Case~\ref{cs:0},  $\P(X>np)\ge\frac12-\max(\ep_*(4),\ep_*(89),\ep_*(90))>0.25587>1/4$. 

\emph{Case~\ref{cs:4}}: 
%\begin{case}\label{cs:4}
%\emph{$c\le np<1$ and $n\ge1$, where $c$ is as in \eqref{eq:c}}.  
%\end{case}
%\emph{Case 4: $c\le np<1$ and $n\ge1$, where $c$ is as in \eqref{eq:c}.} 
Then, by \eqref{eq:case5}, 
%\begin{equation*}
%	\P(X>np)=1 - q^n\ge1 - q^{c/p}=1 - (\tfrac43)^{\frac{\ln(1-p)}p}>1-(\tfrac43)^{-1}
%	=\tfrac14,  
%\end{equation*}
%so that 
$\P(X>np)>\frac14$. Moreover, it was shown in the paragraph containing \eqref{eq:case5} that 
%if $c=\ln\tfrac43$ is replaced %here 
%by any $c_1\in(0,c)$, and if $p=c_1/n$ with $n\to\infty$, then $\P(X>np)=1 - q^n=1 -(1-c_1/n)^n\to1-e^{-c_1}<1-e^{-c}=1/4$. So, 
the constant factor $c$ in \eqref{eq:p} cannot be improved. 

\emph{Case~\ref{cs:3}}: 
%\begin{case}\label{cs:3}
%\emph{$1\le np<2$ and $n\ge3$}.  
%\end{case}
%\emph{Case 3: $1\le np<2$ and $n\ge3$.} 
Then 
\begin{equation*}
	\P(X>np)=\P(X>1)=1-q^n-nq^{n-1}p,
\end{equation*}
which is 
%easy to see to be 
increasing in $p$, by the stochastic monotonicity of $X_{n,p}$ in $p$. So, wlog $p=1/n$, in which case 
$\P(X>np)=f_3(n):=%1-(1-1/n)^n-(1-1/n)^{n-1}
1-(2-1/n)(1-1/n)^{n-1}$. The second derivative of $\ln(1-f_3(n))$ is $1/\big((2 n-1)^2 (n-1) n\big)>0$, so that $\ln(1-f_3)$ is convex. Also, $\ln(1-f_3(n))\to\ln(2/e)$. Therefore, $\ln(1-f_3(n))$ is decreasing (in $n\ge3$) and $f_3(n)$ is increasing, from $f_3(3)=7/27>1/4$. Thus, $\P(X>np)>\frac14$ in Case~\ref{cs:3}.

\emph{Case~\ref{cs:1}}: 
%\begin{case}\label{cs:1}
%\emph{$1<nq\le2$ and $n\ge3$}.  
%\end{case} 
%\emph{Case 1: $1<nq\le2$ and $n\ge3$.} 
Then $n-2\le np<n-1$, $p\ge1-2/n$, and 
\begin{equation*}
	\P(X>np)=\P(X\ge n-1)=f_1(p):=f_1(p,n):=p^n + n p^{n - 1} q,  
\end{equation*}
%We have $f_1'(p)=(n-1) n p^{n-2} q>0$, and so, 
and $f_1(p)$ is increasing in $p$, by the stochastic monotonicity of $X_{n,p}$ in $p$. Therefore, here %without loss of generality (wlog) 
wlog $p=1-2/n$, and  
\begin{equation*}
	\tilde f_1(n):=f_1(1-2/n,n)=\frac{3 n-2}{n-2}\,(1-2/n)^n.  
\end{equation*}
Letting 
\begin{equation*}
	D\tilde f_1(n):=\tilde f'_1(n)\Big/\frac{(1-2/n)^n \left(3 n-2\right)}{n-2}
	=\ln(1-2/n)+
	\frac{6 n-8}{(n-2) (3 n-2)}, 
\end{equation*}
%\begin{equation*}
%	D\tilde f_1(n):=\tilde f'_1(n)\Big/\frac{\left(\frac{n-2}{n}\right)^n \left(3 n^2-8 n+4\right)}{(n-2)^2}
%	=\frac{\left(3 n^2-8 n+4\right) \ln\frac{n-2}{n}+6 n-8}{(n-2) (3 n-2)}, 
%\end{equation*}
we have 
\begin{equation*}
	(D\tilde f_1)'(n)=-\frac{4 \left(3 n^2-4 n+4\right)}{(3 n-2)^2 (n-2)^2 n}<0.  
\end{equation*}
So, $D\tilde f_1$ is decreasing. Also, $D\tilde f_1(\infty-)=0$. It follows that $D\tilde f_1>0$ and hence $\tilde f_1$ is increasing, from $\tilde f_1(3)=\frac 7{27}%=0.25925\ldots
>\frac14$. Thus, $\P(X>np)>\frac14$ in Case~\ref{cs:1}. 

\emph{Case~\ref{cs:2}}: 
%\begin{case}\label{cs:2}
%\emph{$0<nq\le1$ and $n\ge2$}.  
%\end{case}
%\emph{Case 2: $0<nq\le1$ and $n\ge2$.} 
Then $n>np\ge n-1$, $p\ge1-1/n$, and hence 
\begin{equation*}
	\P(X>np)=\P(X=n)=p^n\ge(1-1/n)^n,
\end{equation*}
%%which is increasing in $p$. So, here wlog $p=1-1/n$.  
%Hence $\P(X>np)\ge(1-1/n)^n$, 
and $(1-1/n)^n$ is increasing in $n\ge2$, from $(1-1/2)^2=1/4$.  So, $\P(X>np)>\frac14$ in Case~\ref{cs:2} -- except when $n=2$ and $p=1/2$, in which case $\P(X>np)=\frac14$. 

%\emph{Case~\ref{cs:5}}: 
%%\begin{case}\label{cs:5}
%%\emph{$n\in\{1,2\}$}.  
%%\end{case}
%%\emph{Case 5:} $n\in\{1,2\}$. 
%%If $n=1$, then $\P(X>np)=\P(X=1)=p=np\ge c=0.28768\ldots>1/4$. 
%The cases when either (i) $n=1$ or (ii) $n=2$ and $p<1/2$ are covered by Case~\ref{cs:4}. 
%The case when $n=2$ and $p\ge1/2$ is covered by Case~\ref{cs:2}. 
%%The case when $n=2$ and $p<1/2$ is covered by Case~\ref{cs:4}. 
%
%It remains to consider the case when $n=2$ and $p<1/2$, so that $np\in(0,1)$ and hence $\P(X>np)=1-q^2\ge1-(1-c/2)^2>0.26699>1/4$. 

This completes the second proof of Theorem~\ref{th:}. 
\end{proof}

\end{document}